\newtheorem{theorem}{Theorem}[section]
\def\E{{\mathds E}}
\def\I{{\mathds 1}}
\def\N{{\mathds N}}
\def\P{{\mathds P}}
\def\R{{\mathds R}}
\def\cS{{\mathcal S}}
\def\a{\alpha}
\def\b{\beta}
\def\d{\delta}
\def\s{\sigma}
\def\O{{E}}
\def\bin{{n}}
\begin{document}

\title{Poisson point processes: Large deviation inequalities for the convex distance}
\author{Matthias Reitzner\thanks{E-mail address: matthias.reitzner@uni-osnabrueck.de}\\Universit\"at Osnabr\"uck} 
\date{ }
\maketitle

\begin{abstract}
An analogue of Talagrand's convex distance for binomial and Poisson point processes is defined. A corresponding large deviation inequality is proved.
\end{abstract}

\section{Introduction and statement of results}
Since many years concentration of measure and large deviation inequalities are a subject of active research. Apart from theoretical interest much additional interest in these questions comes from applications in combinatorial optimization, stochastic geometry, and many others. For these problems a deviation inequality due to Talagrand \cite{Tal} turned out to be extremely useful. It combines the notion of {\it convex distance} with an elegant proof of a corresponding  dimension free deviation inequality. 

Let $(\O, \mathcal E, \P)$ be a probability space. Choose $n$ points $x_1, \dots, x_n \in \O$, $ x =(x_1, \dots, x_n) \in \O^n$, and assume that $ A \subset \O^n$ is measurable. 
Talagrand defines his convex distance by
\begin{equation}\label{def:dT}
d_T (x ,A)  = 
\sup_{ \| \a \|_2=1}\  \inf_{ y \in A} 
\sum_{1 \leq i \leq n} \a_i  \I(x_i \neq y_i) 
\end{equation}
where $\a=(\a_1, \dots , \a_n)$ is a vector in $S^{n-1}$. 
For $A \subset \O^n$ denote by 
$A_s:= \{ x: d_T(x, A) \leq s \} $ 
the $s$-parallel set of $A$ with respect to the convex distance.
Talagrand proves that for all $n \in \N$
\begin{equation}\label{eq:LDITal}
 \P^{\otimes n} (X \in A) \P^{\otimes n}  \left( X \notin A_s \right) \leq  e^{- \frac {s^2} 4}  
\end{equation}
where $X=(X_1, \dots , X_n)$ is a random vector with iid random variables $X_1, \dots, X_n$.

\bigskip
To extend this to point processes denote by $\bar  N(\O)$ the set of all 
finite counting measures $\xi= \sum_1^k \d_{x_i}$, $x_i \in \O$, $k \in \N_0$, or equivalently 
finite point sets $\{ x_1, x_2, \dots, x_k\}$ eventually with multiplicity. 

For a function $\a: \O \to \R$ we denote by $\| \a\|_{2,\xi}$ the 2-norm of $\a$ with respect to the measure $\xi$.
For two counting measures $\xi$ and $ \nu$ the (set-)difference $\xi \backslash \nu$  is defined by
\begin{equation} \label{def:xi-nu}
\xi \backslash \nu = 
\sum_{x:\, \xi(x)>0} (\xi(x)-\nu(x))_+ \, \d_x .
\end{equation}
As will be shown in Section \ref{sec:dT}, the natural extension of $d_T$ to 
Poisson counting measures $\eta \in \bar N(\O)$ with $\eta(\O) < \infty$ acts on $\bar  N(\O)$ and is defined by 
\begin{equation} \label{def:dTpi}
d_T^\pi (\eta ,A)  
=
\sup_{\| \a \|_{2,\eta}^2 \leq 1}\   \inf_{ \nu \in A} 
 \int \a  \, d(\eta \backslash \nu) 
\end{equation}
for $A \subset \bar  N(\O)$. Here the supremum is taken over all nonnegative functions $\a: \O \to \R$.

The main result of this paper is an extension of Talagrand's isoperimetric inequality to Poisson point processes on lcscH (locally compact second countable Hausdorff) spaces. If $\eta$ is a Poisson point process then the random variable $\eta(A)$ is Poisson distributed for each set $A \subset \O$ and the expectation $\E \eta (A)$ is the intensity measure of the point process.
For $A \subset \O^n$ we denote by 
$A_s^\pi := \{ x: d_T^\pi(x, A) \leq s \} $ 
the $s$-parallel set of $A$ with respect to the convex distance $d_T^\pi$.

\begin{theorem}\label{th:LDIallg}
Let  $\O$ be a lcscH space and let $\eta$ be a Poisson point process on $\O$ with finite intensity measure $\E \eta(\O)< \infty$. Then for any measurable subset $A \subset \bar  N(\O)$ we have
$$ \P (\eta \in A) \P  \left( \eta \notin A_s^\pi  \right) \leq  e^{- \frac {s^2} 4}  . 
$$ 
\end{theorem}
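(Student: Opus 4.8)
The plan is to deduce the statement from Talagrand's product-space inequality \eqref{eq:LDITal} by approximating the Poisson process by a binomial point process living on an enlarged \emph{genuine} product space, and then passing to a limit. Write $\lambda = \E\eta(\O) < \infty$ and let $\m = \lambda^{-1}\E\eta$ be the normalized intensity, a probability measure on $\O$. For $m \in \N$ I introduce the augmented space $\tilde\O = \O \cup \{\partial\}$ with an extra ``empty'' point $\partial$, equipped with the probability measure $\m_m$ that assigns mass $\lambda/m$ to a $\m$-distributed point of $\O$ and mass $1 - \lambda/m$ to $\partial$. On $\tilde\O^m$ with the product measure $\m_m^{\otimes m}$ I consider the projection $\Pi(\tilde x_1, \dots, \tilde x_m) = \sum_{i:\,\tilde x_i \neq \partial} \d_{\tilde x_i}$ onto $\bar N(\O)$, writing $\eta_m = \Pi(\tilde X)$ for the resulting binomial point process. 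For measurable $A \subset \bar N(\O)$ I set $\tilde A := \Pi^{-1}(A)$, so that $\m_m^{\otimes m}(\tilde A) = \P(\eta_m \in A)$.

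The heart of the argument is a comparison lemma: for every $\tilde x \in \tilde\O^m$,
\[
 d_T(\tilde x, \tilde A) \;\geq\; d_T^\pi(\Pi(\tilde x), A).
\]
To prove it, fix a nonnegative $\b : \O \to \R$ with $\|\b\|_{2,\eta}^2 \leq 1$, where $\eta = \Pi(\tilde x)$, and define $\a_i = \b(\tilde x_i)$ when $\tilde x_i \neq \partial$ and $\a_i = 0$ otherwise; then $\|\a\|_2^2 = \int \b^2\, d\eta = \|\b\|_{2,\eta}^2 \leq 1$. For any $\tilde y \in \tilde A$ put $\nu = \Pi(\tilde y) \in A$. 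The decisive step is an elementary counting inequality: at each location $x$ the number of coordinates $i$ with $\tilde x_i = x$ and $\tilde y_i \neq x$ is at least $(\eta(x) - \nu(x))_+$, since at most $\min(\eta(x), \nu(x))$ coordinates can carry the value $x$ in both $\tilde x$ and $\tilde y$. Multiplying by $\b(x)$ and summing over $x$ yields $\sum_i \a_i \I(\tilde x_i \neq \tilde y_i) \geq \int \b\, d(\eta \backslash \nu)$. Taking the infimum over $\tilde y \in \tilde A$ (whose images $\Pi(\tilde y)$ form a subset of $A$, so the infimum only grows), scaling $\a$ up to unit norm, and finally taking the supremum over $\b$ gives the lemma.

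Granting the lemma, the parallel sets satisfy $(\tilde A_s)^c = \{d_T(\cdot, \tilde A) > s\} \supseteq \Pi^{-1}(\{d_T^\pi(\cdot, A) > s\}) = \Pi^{-1}((A_s^\pi)^c)$, whence $\m_m^{\otimes m}((\tilde A_s)^c) \geq \P(\eta_m \notin A_s^\pi)$. Combined with $\m_m^{\otimes m}(\tilde A) = \P(\eta_m \in A)$ and Talagrand's inequality \eqref{eq:LDITal} on the product space $\tilde\O^m$, this gives, for every $m$,
\[
 \P(\eta_m \in A)\, \P(\eta_m \notin A_s^\pi) \;\leq\; \m_m^{\otimes m}(\tilde A)\, \m_m^{\otimes m}((\tilde A_s)^c) \;\leq\; e^{-s^2/4}.
\]
It remains to let $m \to \infty$. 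Here I use that $\eta_m$ and $\eta$ have identical conditional laws given their total number of points (in both cases iid $\m$-distributed points), so their total variation distance equals that between $\mathrm{Bin}(m, \lambda/m)$ and $\mathrm{Poisson}(\lambda)$, which tends to $0$. A maximal coupling then gives $\P(\eta_m \neq \eta) \to 0$, hence $\P(\eta_m \in A) \to \P(\eta \in A)$ and $\P(\eta_m \notin A_s^\pi) \to \P(\eta \notin A_s^\pi)$ for every measurable set, and passing to the limit in the displayed inequality yields the claim. I expect the comparison lemma — in particular the matching/counting step relating the coordinatewise indicators of $d_T$ to the excess measure $\eta \backslash \nu$ of $d_T^\pi$ — to be the main obstacle. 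The passage through a genuine product space is essential: merely conditioning on the number of points and applying Talagrand slice-by-slice would prove the bound for each fixed count but not survive the mixing over a random number of points, since a product of conditional bounds of the form $a_n b_n \leq e^{-s^2/4}$ need not yield $(\sum_n p_n a_n)(\sum_n p_n b_n) \leq e^{-s^2/4}$.
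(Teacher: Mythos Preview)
Your argument is correct and follows the same three-step strategy as the paper: embed the binomial process in a product space $\tilde\O^m$ with an extra ``empty'' state, invoke Talagrand's inequality \eqref{eq:LDITal} there, and pass to the Poisson limit. The differences are in the execution, and both favor your version. First, the paper establishes the exact identity $d_T(x,\hat A)=d_T^{\bin}(\xi_n,A)$ (Theorem~\ref{th:dt=dtbin}) via a somewhat involved analysis of the $\triangle$-coordinates, and only afterwards uses the trivial bound $d_T^{\bin}\geq d_T^\pi$; you instead prove the one-sided inequality $d_T(\tilde x,\tilde A)\geq d_T^\pi(\Pi(\tilde x),A)$ directly by the elementary matching/counting argument, which is all that is needed for the deviation bound and is considerably shorter. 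Second, in the limit step the paper appeals to convergence in distribution of $\xi_n$ to $\eta$ and then asserts $\P(\xi_n\in B)\to\P(\eta\in B)$ for arbitrary measurable $B\subset\bar N(\O)$; as written this requires an additional word (weak convergence alone does not control non-continuity sets), whereas your total-variation argument, based on the observation that $\eta_m$ and $\eta$ share the same conditional law given their total mass, handles every measurable $B$ at once.
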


It is the aim of this paper to stimulate further investigations on this topic.
Of high interest would be an extension of this theorem to the case of point processes of possible infinite intensity measure. On the way to such a result one has to extend the notion of convex distance to locally 
finite counting measures with $\xi(\O)=\infty$. It is unclear whether (\ref{def:dTpi}) is the correct way to define convex distance in general, see the short discussion in Section~\ref{sec:dT}.

Our method of proof consists of an extension of Talagrand's large deviation inequality first to binomial processes and then to Poisson point processes. It would also be of interest to give a proof of our theorem using only methods from the theory of point processes. We have not been able to find such a direct proof for Theorem \ref{th:LDIallg}.

Our investigations are motivated by a problem in stochastic geometry. In \cite{ReSchuTh} Theorem \ref{th:LDIallg} is used to prove a large deviation inequality for the length of the Gilbert graph.

\section{Binomial point processes}

Assume that $\mu$ is a probability measure on $\O$. The sets $A,B $ considered in the following are measurable.

Let $\xi_n$ be a binomial point process on $\O$ of intensity $t \mu$, $ 0 \leq t \leq 1$, with parameter $n$. This means that for any $B \subset \O$ we have
\begin{equation}\label{def:binpro}
\P( \xi_n (B) = k) = 
\binom n k ( t \mu(B) )^{k} (1- t \mu(B) )^{n-k} .
\end{equation}

To link $\xi_n$ to $n$ iid points in $\O$ we consider the following natural construction.
We choose $n$ independent points in $\O$ according to the underlying probability measure $\mu $ and for each point we decide independently with probability $t$ if it occurs in the process or not.
To make this precise we add to $\O$ an artificial element $\triangle$ at infinity (containing of all points which have been deleted), define 
$\hat \O= \O \cup \{\triangle\}$ and extend $\mu$ to $\hat \O$ by 
$$\hat \mu (\hat B)= t \mu(\hat B \backslash \triangle) + \left(1- t\right) \,  \d_\triangle (\hat B) \  \mbox{ for } \hat B \subset \hat \O . $$
Hence a random point $X_i \in \hat \O$ chosen according to $\hat \mu$ is in $\O$ with probability $t$ and equals $\triangle$ with probability $1-t$. 
Define the projection $\pi$ of $x \in \hat \O^n  $ unto $ \bar  N(\O)$ by \lq deleting\rq\   all points $x_i=\triangle$, i.e.
$$
 \pi (x) = \pi ((x_1, \dots, x_n)) = \sum_{i=1}^n \I(x_i \neq \triangle) \d_{x_i} \in \bar  N(\O) .
$$
If $B \subset \O$, any set of $n$ iid random points $X_1, \dots, X_n$ chosen according to $\hat \mu$ satisfies
$$
\P \Big( \pi(X_1, \dots, X_n)(B) =k \Big)=
\binom n k ( t \mu(B) )^{k} (1- t \mu(B) )^{n-k}  .
$$
By the definition (\ref{def:binpro}) this shows that $\pi(X_1, \dots, X_n)(B)$ equals in distribution $\xi_n(B)$ for all sets $B \subset \O$. It is well known that this implies that $ \pi(X_1, \dots, X_n)$ equals in distribution $\xi_n$ for all subsets of $\bar N(\O)$.

Assume that $\hat A \subset \hat \O^n$ is a symmetric set, i.e. if $y=(y_1, \dots y_n) \in \hat A $ then also $(y_{\s(1)}, \dots, y_{\s(n)} ) \in \hat A$ for all permutations $\s \in  \cS_{n}$. Here $\cS_I$ is the group of permutations of a set $I \subset \N$, and we write $\cS_n$ if  $I=\{1, \dots, n\}$. It is immediate that a symmetric set is the preimage of a set $A \subset \bar N(\O)$ under the projection $\pi$ where 
$\pi(\hat A) = \bigcup_{y \in \hat A} \pi(y) \subset \bar N(\O) $.
As shown above for a random vector $X=(X_1, \dots, X_n)$ with iid coordinates we have 
\begin{equation} \label{eq:Pgleich}
 \P( X \in \hat A)= \P (\pi(X) \in \pi(\hat A)) = \P(\xi_n \in A) . 
\end{equation}

\medskip
The essential observation is that the convex distance $d_T( x, \hat A)$ defined in (\ref{def:dT}) for $x \in \hat \O^n$ is compatible with the projection $\pi$ and yields the convex distance 
\begin{eqnarray} \label{def:dTbin}
 d_T^{\bin} (\xi_n ,A)  &=& 
\sup_{\| \a\|_{2, \xi_n}^2  \leq 1 } \ \inf_{ \nu \in A } 
\Big[ \int \a d(\xi_n\backslash \nu) 
\\ \nonumber &&  \hskip3cm
+  \frac {(\nu(\O)- \xi_n(\O) )_+}{(n- \xi_n(\O))^{\frac 12}} (1- \| \a \|_{2, \xi_n}^2)^{\frac 12}  
\Big]  
\end{eqnarray}
on the space $\bar N(\O)$. 
\begin{theorem}\label{th:dt=dtbin}
Assume $x \in \hat \O^n$ and that $\hat A \subset \hat \O^n$ is a symmetric set. Then for $\xi_n = \pi(x)$ and $A = \pi(\hat A)$ we have 
$$d_T(x, \hat A)= d_T^{\bin} (\xi_n,  A)  . $$  
\end{theorem}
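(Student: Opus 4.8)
The plan is to reduce the supremum defining $d_T(x,\hat A)$ over per-coordinate weight vectors $\a\in S^{n-1}$ to a supremum over weights that are constant on each group of equal coordinates, and then to recognise the resulting quantity as $d_T^{\bin}(\xi_n,A)$. I would begin with three reductions. First, since $\hat A$ is symmetric and $\pi$ merely deletes the symbols $\triangle$, any rearrangement of a vector of $\hat A$ again lies in $\hat A$; hence $y\in\hat A$ iff $\pi(y)\in A$, so that
\[
d_T(x,\hat A)=\sup_{\|\a\|_2=1}\ \inf_{\nu\in A}\ \inf_{y:\,\pi(y)=\nu}\ \sum_{i}\a_i\,\I(x_i\neq y_i).
\]
Second, replacing $\a$ by $(|\a_1|,\dots,|\a_n|)$ leaves $\|\a\|_2$ unchanged and, as the summands are nonnegative, cannot decrease the inner infimum; so I may assume $\a_i\ge0$. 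Third, for fixed $\nu$ the innermost infimum is a minimum-cost assignment: one distributes the points of $\nu$ together with $n-\nu(\O)$ copies of $\triangle$ over the $n$ coordinates, where a coordinate $i$ contributes $\a_i$ unless it receives the correct symbol (the value $x_i\in\O$, or $\triangle$ when $x_i=\triangle$).

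Next I would solve this assignment problem. Put $m=\xi_n(\O)$ and $\ell=\nu(\O)$. The zero-cost edges form disjoint complete bipartite blocks: one block for each value $u\in\O$, joining its $\xi_n(u)$ coordinates to its $\nu(u)$ symbols, and one $\triangle$-block joining the $n-m$ coordinates carrying $\triangle$ to the $n-\ell$ symbols $\triangle$. Since any zero-cost partial matching extends to a bijection, the minimum cost equals $\sum_i\a_i$ minus the maximum, over zero-cost matchings, of the total weight $\sum\a_i$ of the covered coordinates; and within a complete bipartite block such a maximum saturates the smaller side using the largest coordinate weights. This gives
\[
\inf_{y:\,\pi(y)=\nu}\sum_i\a_i\,\I(x_i\neq y_i)=\sum_{u}\phi_u\big((\xi_n(u)-\nu(u))_+\big)+\psi\big((\ell-m)_+\big),
\]
where $\phi_u(r)$ is the sum of the $r$ smallest weights among the coordinates of value $u$ and $\psi(r)$ the sum of the $r$ smallest $\triangle$-weights; here I use $\xi_n(u)-\min(\xi_n(u),\nu(u))=(\xi_n(u)-\nu(u))_+$ and $(n-m)-\min(n-\ell,n-m)=(\ell-m)_+$.

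The main obstacle is the passage from per-coordinate to group-constant weights, and I would overcome it by equalisation. Given any admissible $\a\ge0$, replace every weight inside a fixed group by the quadratic mean $\bar\a$ of that group; this preserves $\|\a\|_2$. Because the sum of the $r$ smallest entries of a group is at most $r$ times their arithmetic mean, which in turn is at most $r\bar\a$, each $\phi_u$ and $\psi$ can only increase. Hence the displayed cost increases for every $\nu$, and so does its infimum over $\nu$; carrying this out over all groups shows that the outer supremum is attained at weights constant on each value-group and on the $\triangle$-group.

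Finally I would identify the group-constant problem with $d_T^{\bin}$. For such a weight let $a(u)$ be the common value on the coordinates of value $u$ and $b$ the common value on the $n-m$ coordinates carrying $\triangle$. By the definition $(\ref{def:xi-nu})$ the cost formula collapses to
\[
\int a\,d(\xi_n\backslash\nu)+(\ell-m)_+\,b,
\]
while $\|\a\|_2^2=\|a\|_{2,\xi_n}^2+(n-m)b^2=1$. Writing $\beta^2=\|a\|_{2,\xi_n}^2$ forces $b=\big((1-\beta^2)/(n-m)\big)^{1/2}$, turning the expression into exactly the bracket of $(\ref{def:dTbin})$; taking the supremum over nonnegative $a$ with $\beta^2\le1$ then yields $d_T^{\bin}(\xi_n,A)$. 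The degenerate case $m=n$ is harmless, since then $\ell\le n=m$ makes $(\ell-m)_+=0$ and the $b$-term drops out.
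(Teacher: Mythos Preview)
Your argument is correct and follows the same overall route as the paper: use the symmetry of $\hat A$ to turn the inner infimum into a matching problem, compute the optimal matching for fixed $\nu$ as the sum of the smallest coordinate weights within each value-block, and then reduce the outer supremum to group-constant weights, which produces the bracket in (\ref{def:dTbin}).

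Two differences are worth recording. First, the paper carries out the reduction to constant weights only for the $\triangle$-block, sandwiching the sum of the $(\nu(\O)-\xi_n(\O))_+$ smallest $\beta_j$ between a H\"older upper bound and the value obtained by taking all $\beta_j$ equal; for the non-$\triangle$ coordinates it simply sets $\alpha(u)=\alpha_i$ whenever $u=x_i$ and asserts $\inf_\sigma\sum_i\alpha_i\I(x_i\neq y_{\sigma(i)})=\int\alpha\,d(\xi\backslash\nu)$, which is ambiguous (and the pointwise identity fails) when several $x_i$ share the same value in $\O$. Your equalisation step treats all value-blocks and the $\triangle$-block uniformly and thereby closes this gap: it shows directly that the supremum over per-coordinate weights is attained at group-constant weights, after which the identification with $\int a\,d(\xi_n\backslash\nu)$ is immediate. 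Second, your one-sided monotonicity argument (replace by quadratic means; since the sum of the $r$ smallest entries is at most $r$ times the quadratic mean, every $\phi_u$ and $\psi$ can only increase) is a tidier substitute for the paper's two-sided estimate, though the underlying inequality is of course the same.
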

\begin{proof}
Since $\hat A$ is a symmetric set for any function $f$ 
$$
\inf_{ y \in \hat A} f(y_1, \dots , y_n) = 
\inf_{ y \in \hat A, \s \in \cS_n} f(y_{\s(1)}, \dots , y_{\s(n)}) ,
$$
and we can rewrite the convex distance on $\hat \O^n$ given by (\ref{def:dT}) as
\begin{eqnarray*}
d_T (x ,\hat A)  
&=& 
\sup_{\| \a\|_2\leq 1}\  \inf_{ y \in \hat A } \inf_{\s \in \cS_n} 
\sum_{1 \leq i \leq n} \a_i  \I( x_i  \neq y_{\s(i)}) .
\end{eqnarray*}
We write $\xi_n=\pi(x), \nu=\pi(y)$.
It is immediate by the symmetry of $\hat A$ that $d_T(x, \hat A)$ is invariant under any permutation of $x_1, \dots x_n$.
Hence we assume w.l.o.g. that $x_i $ are sorted in such a way that $x_i \neq \triangle$ for $i=1, \dots, \xi_n(\O)$ and $x_i=\triangle$ for $i \geq \xi_n(\O)+1$. 
$$
d_T (x ,\hat A)  \nonumber
= 
\sup_{\| \a\|_2 = 1}\  \inf_{ y \in \hat A } \inf_{\s \in S_{n}} 
\Big[ \sum_{i=1}^{ \xi_n(\O)} \a_i  \I( x_i  \neq y_{\s(i)})
+ \sum_{i= \xi_n(\O) +1}^{n}  \a_i \I(y_{\s(i)} \neq \triangle) 
\Big]  
$$
Here the second summand is zero if $\nu(\O) \leq \xi(\O)$.
For fixed $x$ and $y$ we decrease the summands if we assume that the permutation acts in such a way that the maximal number of $\triangle$'s in $x$ and $y$ coincide.
If $\nu (\O) \leq \xi_n(\O)$ this means that the minimum over $\cS_n$ is attained if $y_{\s(i)}= \triangle$ for all $\s(i) \geq \xi_n(\O)$ which coincides with the fact that the second summand in this case vanishes. If $\nu (\O) > \xi_n(\O)$ then $y_{\s(i)}= \triangle$ implies $\s(i) \geq \xi_n (\O)$. To make things more visible we take in this case the infimum over additional permutations $\tau \in \cS_{[\xi_n(\O)+1, n]}$ of the second summand.
\begin{eqnarray}\label{eq:dTsplit}
d_T (x ,\hat A)
&=& 
\sup_{\| \a\|_2 = 1}\  \inf_{ y \in \hat A } \inf_{\s \in S_{n}} 
\Big[ \sum_{i=1}^{ \xi_n(\O)} \a_i  \I( x_i  \neq y_{\s(i)})
\\ && \hskip3cm \nonumber
+ \inf_{\tau \in S_{[\xi_n(\O)+1, n]}}  \sum_{i= \xi_n(\O) +1}^{n}  \a_i \I(y_{\tau (\s(i))} \neq \triangle) 
\Big]  .
\end{eqnarray}
The second summand equals the sum of the $(\nu(\O) - \xi_n(\O))_+$ smallest $\a_i$'s in 
$\{ \a_{\xi_n(\O)+1} , \dots, \a_n \}$. We set $\a_{ \xi_n(\O)+i} =\b_i$ for $i=1, \dots, n-\xi_n(\O)$ and denote by $\b_{(1)} \leq \dots \leq \b_{(n- \xi_n(\O))}$ the order statistic of the $\b_i$. 
We obtain
\begin{eqnarray*}
d_T (\xi ,A)  
&=&
\sup_{\| \a\|_2^2 + \| \b \|_2 ^2 = 1} \ 
\inf_{ y \in \hat A } \inf_{\s \in S_{n}} 
\Big[ \sum_{i=1}^{ \xi_n(\O)} \a_i  \I( x_i  \neq y_{\s(i)})
+  \sum_{j= 1}^{(\nu(\O)- \xi_n(\O) )_+}  \b_{(j)} 
\Big]  
\end{eqnarray*}
where from now on  $\| \a \|_2^2 = \sum_1^{\xi_n(\O)} \a_i^2$.
The $\b_j^2$ sum up to $1- \| \a \|_2^2$ so that the sum of the $k$-th smallest is at most 
$  (1- \| \a \|_2^2) k/(n - \xi_n (\O))$. H\"older's inequality yields
\begin{eqnarray*}
d_T (x , \hat A)  
&\leq &
\sup_{\| \a\|_2^2 + \| \b \|_2 ^2 = 1} \ 
\inf_{ y \in \hat A } \inf_{\s \in S_{n}} 
\Big[ \sum_{i=1}^{ \xi_n(\O)} \a_i  \I( x_i  \neq y_{\s(i)})
\\ && \hskip3cm \nonumber
+ \Big( (\nu(\O)- \xi_n(\O))_+ \sum_{j= 1}^{(\nu(\O)- \xi_n(\O) )_+}  \b_{(j)}^2  \Big)^{\frac 12}
\Big] 
\\ &\leq &
\sup_{\| \a\|_2^2 \leq 1} \ 
\inf_{ y \in \hat A } \inf_{\s \in S_{n}} 
\Big[ \sum_{i=1}^{ \xi_n(\O)} \a_i  \I( x_i  \neq y_{\s(i)})
\\ && \hskip3cm \nonumber
+  \frac {(\nu(\O)- \xi_n(\O))_+}{(n - \xi_n (\O))^{\frac 12}} (1- \| \a \|_2^2)^{\frac 12} 
\Big] .
\end{eqnarray*}
On the other hand if we take $\b_j^2 = (1- \| \a \|_2^2)/(n- \xi_n(\O))$ we have 
$ \| \b \|_2^2 
=1 - \| \a \|_2^2 $
and the supremum is bounded from below by setting $\b_j$ equal to these values.
\begin{eqnarray*}
d_T (x , \hat A)  
&\geq &
\sup_{\| \a\|_2^2  \leq 1 } \ 
\inf_{ y \in \hat A } \inf_{\s \in S_{n}} 
\Big[ \sum_{i=1}^{ \xi_n(\O)} \a_i  \I( x_i  \neq y_{\s(i)})
\\ && \hskip3cm \nonumber
+  \frac {(\nu(\O)- \xi_n(\O) )_+}{(n- \xi_n(\O))^{\frac 12}} (1- \| \a \|_2^2)^{\frac 12}  
\Big]  . 
\end{eqnarray*}
Both bounds coincide so that $d_T$ equals the right hand side.
Define the function $\a: \O \to \R$ by
$$ \a (x)= \left\{ \begin{array}{ll}
                    \a_i \ & \mbox{ if } x=x_i \\
                    0 & \mbox{ otherwise}
                   \end{array} \right.
$$
so that 
$
\| \a \|_{2, \xi}^2= \int \a^2 d\xi  = \sum_{i=1}^{\xi_n(\O)} \a_i^2  = \| \a \|_2^2  
$
and by the definition (\ref{def:xi-nu}) of $\xi \backslash \nu$ 
$$
\inf_{\s \in \cS_{\xi_n(\O)}}  \sum_{i=1}^{\xi_n (\O) } \a_i  \I(x_i \neq y_{\s(i)}) 
= \int \a d(\xi\backslash \nu) .
$$
This proves
$$ d_T (x , \hat A)  = 
\sup_{\| \a\|_{2, \xi_n}^2  \leq 1 } \ \inf_{ \nu \in A } 
\Big[ \int \a d(\xi\backslash \nu) 
+  \frac {(\nu(\O)- \xi_n(\O) )_+}{(n- \xi_n(\O))^{\frac 12}} (1- \| \a \|_{2, \xi_n}^2)^{\frac 12}  
\Big]  . 
$$
\end{proof}

By (\ref{eq:Pgleich}) we have $\P(X \in \hat A)=\P(\xi_n \in A)$ 
for any measurable symmetric subset $\hat A$ of $\O^n$. Recall that $\xi_n = \pi(X)$ and $A = \pi(\hat A)$.  
Theorem (\ref{th:dt=dtbin}) shows that 
$$d_T(X, \hat A) \geq s \ \mbox{ iff }\  d_T^{\bin}(\xi_n, A) \geq s ,$$ 
so that $X\notin \hat A_s$ iff $\xi_n \notin A_s^{\bin}$. Here we denote by $A_s^{\bin}$ the parallel set with respect to the distance $d_T^{\bin}$.
Again by (\ref{eq:Pgleich}) this yields 
$\P(X \notin \hat A_s)  =\P(\xi_n \notin A_s^{\bin})$. 
Combining this with Talagrand's large deviation inequality (\ref{eq:LDITal}), 
$$
\P(X \in \hat A) \P(X \notin \hat A_s) \leq e^{- \frac{s^2}4}
$$
we obtain a large deviation inequality for the binomial process.
\begin{theorem}\label{th:LDIbin}
Assume $\xi_n$ is a binomial point process with parameter $n$ on $\O$. Then we have 
$$ \P (\xi_n \in A) \P  \left( \xi_n \notin A_s^{\bin}  \right) \leq  e^{- \frac {s^2} 4}  $$ 
for any $A \subset \bar N(\O)$.
\end{theorem}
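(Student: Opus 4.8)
The plan is to deduce the statement directly from the three ingredients already assembled: Talagrand's inequality (\ref{eq:LDITal}) on the product space $\hat\O^n$, the probabilistic identity (\ref{eq:Pgleich}), and the matching of distances in Theorem \ref{th:dt=dtbin}. The only genuine work is to turn an arbitrary measurable $A \subset \bar N(\O)$ into a symmetric subset of $\hat\O^n$ to which these results apply; after that the argument is pure bookkeeping.

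First I would set $\hat A := \pi^{-1}(A) \subset \hat\O^n$. Since $\pi$ is invariant under permutations of the coordinates, $\hat A$ is automatically symmetric, and it is measurable because $\pi$ is measurable. Moreover $\pi(\hat A) = A \cap \{\nu : \nu(\O) \le n\}$, since $\pi$ maps onto exactly those counting measures carrying at most $n$ points. Because the binomial process $\xi_n$ carries at most $n$ points almost surely, $\P(\xi_n \in A) = \P(\xi_n \in \pi(\hat A))$; and since discarding from the infimum in (\ref{def:dTbin}) the candidates $\nu$ with $\nu(\O) > n$ can only increase $d_T^{\bin}$, the associated parallel set shrinks, so $\P(\xi_n \notin A_s^{\bin}) \le \P(\xi_n \notin (\pi(\hat A))_s^{\bin})$. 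Hence it suffices to prove the inequality for $\pi(\hat A)$, and we may assume $A = \pi(\hat A)$ from the outset.

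With $\xi_n = \pi(X)$ for the iid vector $X = (X_1, \dots, X_n)$ sampled from $\hat\mu$, equation (\ref{eq:Pgleich}) gives $\P(X \in \hat A) = \P(\xi_n \in A)$. Theorem \ref{th:dt=dtbin} gives $d_T(X, \hat A) = d_T^{\bin}(\xi_n, A)$ pointwise, so the events $\{X \notin \hat A_s\}$ and $\{\xi_n \notin A_s^{\bin}\}$ coincide and therefore $\P(X \notin \hat A_s) = \P(\xi_n \notin A_s^{\bin})$. Feeding these two identities into Talagrand's inequality (\ref{eq:LDITal}), applied to the product probability space $(\hat\O^n, \hat\mu^{\otimes n})$ and the measurable symmetric set $\hat A$, yields
$$\P(\xi_n \in A)\, \P(\xi_n \notin A_s^{\bin}) = \P(X \in \hat A)\, \P(X \notin \hat A_s) \le e^{-\frac{s^2}{4}} .$$

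I expect the substantive difficulty to be already behind us, concentrated entirely in Theorem \ref{th:dt=dtbin}: once the convex distance on $\hat\O^n$ is identified with $d_T^{\bin}$ on $\bar N(\O)$, nothing remains but translating probabilities through $\pi$. The only point in the present argument that deserves a moment's care is the reduction to measures with at most $n$ points, where one must verify that restricting $A$ does not weaken the conclusion; this follows from the monotonicity of the infimum defining $d_T^{\bin}$ together with the almost-sure bound $\xi_n(\O) \le n$.
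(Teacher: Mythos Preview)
Your proof is correct and follows the same approach as the paper: lift $A$ to a symmetric $\hat A \subset \hat\O^n$, invoke (\ref{eq:Pgleich}) and Theorem~\ref{th:dt=dtbin} to translate both probabilities, and apply Talagrand's inequality (\ref{eq:LDITal}). You are in fact slightly more careful than the paper in explicitly reducing to the case $\nu(\O)\le n$ via monotonicity of the infimum defining $d_T^{\bin}$; the paper's argument tacitly assumes $A=\pi(\hat A)$ without spelling out this reduction.
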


\section{Poisson point processes}

We extend Theorem \ref{th:LDIbin} to Poisson point processes using the usual approximation of a Poisson point process by Binomial point processes. Assume that the state space $\O$ is a lcscH space (locally compact second countable Hausdorff space) and that $\mu$ is a probability measure on $\O$.

Fix some $t>0$ and recall that $\mu$ is a probability measure on $\O$. Set $t_n= t/n$ for $n \in \N$, $t \geq 0$ and assume that $n$ is sufficiently large such that $t/n \leq 1$.
It is well know (see Jagers \cite{Jag}, or Theorem 16.18 in Kallenberg \cite{Kallenberg2002}) that on an lcscH space $\O$  a sequence of binomial point processes $\xi_n$ with intensity measure $t_n \mu$ and parameter $n$ converge in distribution to a Poisson point process $\eta$ with intensity measure $t \mu$ as $n \to \infty$. 
Thus for $B \subset \bar N(\O)$ we have as $n \to \infty$
\begin{equation}\label{eq:bintoPois}
\P(\xi_n \in B) \to  \P (\eta \in B) .
\end{equation}

The distance $d_T^{\bin}$ depends on $n$ and has to be extended from binomial to Poisson point processes as $n \to \infty$. As stated in the introduction we use as a suitable definition
$$
 d_T^\pi (\xi, A)= 
\sup_{\| \a\|_{2, \xi_n}^2  \leq 1 } \ \inf_{ \nu \in A } \int \a d(\xi \backslash \nu) 
$$
This is motivated by the more detailed investigations in Section \ref{sec:dT}. 
For $A \subset \bar N(\O)$ let $A_s^\pi$ be the parallel set of $A$ with respect to the distance $d_T^\pi$.
It is immediate that 
\begin{equation} \label{eq:dTpi<bin}
d_T^\pi (\xi, A) \leq d_T^{\bin} (\xi, A) . 
\end{equation}
This implies
$ A_s^\pi \supset A_s^{\bin} $ and thus for a binomial point process $\xi_n$
\begin{equation}\label{eq:aspi<}
 \P(\xi_n \notin A_s^\pi) \leq \P(\xi_n \notin A_s^{\bin}) .
\end{equation}
The following theorem is an immediate consequence of Theorem \ref{th:LDIbin} and formulae (\ref{eq:aspi<}) and (\ref{eq:bintoPois}).
\begin{theorem}\label{th:LDIpoi}
Assume $\eta$ is a Poisson point process 
on some lcscH space $\O$ with $\E \eta (\O)< \infty$. Then we have 
$$ \P (\eta \in A) \P  \left( \eta \notin A_s^\pi  \right) \leq  e^{- \frac {s^2} 4}  $$ 
for any $A \subset \bar N(\O)$.
\end{theorem}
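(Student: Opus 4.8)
The plan is to derive the Poisson inequality by running Theorem~\ref{th:LDIbin} along the binomial approximation and letting $n \to \infty$, using the comparison of parallel sets to replace $A_s^{\bin}$ by $A_s^\pi$. First I would fix $t$ and, for each $n$ large enough that $t/n \le 1$, consider the binomial point process $\xi_n$ with intensity $t_n \mu = (t/n)\mu$ and parameter $n$, which converges in distribution to $\eta$. Theorem~\ref{th:LDIbin} applies to $\xi_n$ and gives
\begin{equation*}
\P(\xi_n \in A)\,\P(\xi_n \notin A_s^{\bin}) \le e^{-s^2/4}.
\end{equation*}
Since $d_T^\pi(\xi, A) \le d_T^{\bin}(\xi, A)$ by (\ref{eq:dTpi<bin}), the Poisson parallel set contains the binomial one, $A_s^\pi \supset A_s^{\bin}$, so by (\ref{eq:aspi<}) we have $\P(\xi_n \notin A_s^\pi) \le \P(\xi_n \notin A_s^{\bin})$. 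Substituting this bound yields the key intermediate estimate, valid for every admissible $n$,
\begin{equation*}
\P(\xi_n \in A)\,\P(\xi_n \notin A_s^\pi) \le \P(\xi_n \in A)\,\P(\xi_n \notin A_s^{\bin}) \le e^{-s^2/4}.
\end{equation*}

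The decisive structural feature I would exploit next is that the set $A_s^\pi$ does \emph{not} depend on $n$: the distance $d_T^\pi$ is defined without any reference to the parameter $n$, in contrast to $d_T^{\bin}$. Consequently both $A$ and the complement $\bar N(\O) \setminus A_s^\pi$ are fixed measurable subsets of $\bar N(\O)$, and each can be fed into the weak convergence (\ref{eq:bintoPois}). This produces $\P(\xi_n \in A) \to \P(\eta \in A)$ and $\P(\xi_n \notin A_s^\pi) \to \P(\eta \notin A_s^\pi)$, and passing to the limit in the intermediate estimate gives
\begin{equation*}
\P(\eta \in A)\,\P(\eta \notin A_s^\pi) = \lim_{n \to \infty} \P(\xi_n \in A)\,\P(\xi_n \notin A_s^\pi) \le e^{-s^2/4},
\end{equation*}
which is the assertion.

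The one genuinely delicate point, and the step I would scrutinize most carefully, is this passage to the limit. Mere weak convergence of $\xi_n$ to $\eta$ guarantees $\P(\xi_n \in B) \to \P(\eta \in B)$ only for $\eta$-continuity sets $B$; for an arbitrary measurable $B$ the Portmanteau theorem yields only the one-sided bounds $\liminf_n \P(\xi_n \in G) \ge \P(\eta \in G)$ for open $G$ and the reverse for closed sets. Fortunately the target inequality is one-sided, so exact convergence is not strictly needed: it would suffice that $\liminf_n \P(\xi_n \in A) \ge \P(\eta \in A)$ and $\liminf_n \P(\xi_n \notin A_s^\pi) \ge \P(\eta \notin A_s^\pi)$, since then the $\liminf$ of the product dominates the product of the Poisson probabilities while remaining below $e^{-s^2/4}$. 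Securing these two lower bounds is where the real content sits — one would want $A_s^\pi$ to be closed (i.e.\ $d_T^\pi(\cdot,A)$ lower semicontinuous) and $A$ to be amenable to approximation from inside by open sets. This is precisely the transfer of convergence packaged into the statement (\ref{eq:bintoPois}) invoked for measurable $B$, so once that is granted the remaining argument is routine bookkeeping.
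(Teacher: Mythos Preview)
Your proposal is correct and follows exactly the route the paper takes: apply Theorem~\ref{th:LDIbin} to the approximating binomial processes, use the inclusion $A_s^{\bin}\subset A_s^\pi$ from (\ref{eq:dTpi<bin})--(\ref{eq:aspi<}) to pass to the $n$-free parallel set, and then invoke the convergence (\ref{eq:bintoPois}) to let $n\to\infty$. The paper records precisely this, declaring the theorem ``an immediate consequence of Theorem~\ref{th:LDIbin} and formulae (\ref{eq:aspi<}) and (\ref{eq:bintoPois}).'' Your additional remark that (\ref{eq:bintoPois}) is being applied to arbitrary measurable $B$ rather than just continuity sets is a fair caveat; the paper simply takes that convergence as given and does not address the Portmanteau subtlety you flag.
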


\section{The convex distance}\label{sec:dT}

In this section we collect some facts about the convex distances $d_T^{\bin}$ and $d_T^\pi$ on $\bar N(\O)$. To start with we show that $d_T^{\bin}$ not only gives the lower bound (\ref{eq:dTpi<bin}) for the distance $d_T^\pi$ defined in (\ref{def:dTpi}).
We also prove that $d_T^{\bin} \to d_T^\pi$ as $n \to \infty$ which shows that there is essentially no other natural choice for $d_T^\pi$.

We start with the representation (\ref{def:dTbin}) of the convex distance for binomial point processes. 
Assume $\xi \in \bar N(\O)$ satisfies $\xi(\O)< \infty$. Set
$$D_\a = \inf_{ \nu \in A } \Big[ \int \a d(\xi \backslash \nu) 
+  \frac {(\nu(\O)- \xi (\O) )_+}{(n- \xi (\O))^{\frac 12}} (1- \| \a \|_{2, \xi }^2)^{\frac 12}  
\Big]  
$$
so that 
$ d_T^{\bin} (\xi  ,A)  = 
\sup \{ D_\a:\ \| \a\|_{2, \xi }^2  \leq 1  \} $.

Since $\xi $ is finite, the map $\nu \to \xi  \backslash\nu $ can take only finitely many \lq values\rq\  $\mu_1, \dots, \mu_m \in \bar N(\O)$. Write $A_1, \dots , A_m \subset A$ for the preimage of the measures $\mu_i$ under this map. 
Denote for $i=1, \dots, m$ by $\nu_i $ one of the counting measures in $A_i$ with minimal $\nu(\O)$. Note that these minimizers are independent of $\a$. Assume that $\nu_i(\O) \leq \dots \leq \nu_m(\O)$.
We compute the infimum over $\nu \in A$ by taking the infimum over $\nu \in A_i$ and then the minimum over $i=1, \dots, m$.
\begin{eqnarray*}
D_\a
& = &
\min_{i=1, \dots, m}  
\left[ \int \a d\mu_i  + \frac {\inf_{\nu \in A_i} (\nu(\O)- \xi (\O) )_+}{(n- \xi (\O))^{\frac 12}} (1- \| \a \|_{2, \xi }^2)^{\frac 12}  
 \right] .
\\ & = &
\min_{i=1, \dots, m}  
\left[ \int \a d\mu_i  + \frac {(\nu_i(\O)- \xi (\O) )_+}{(n- \xi (\O))^{\frac 12}} (1- \| \a \|_{2, \xi }^2)^{\frac 12}  
 \right] .
\end{eqnarray*}
Since $\nu_i(\O)$ is bounded by $\nu_m(\O)$ we have 
\begin{eqnarray*}
\min_{i=1, \dots ,m} \int \a d\mu_i 
\leq
D_\a
\leq 
\min_{i=1, \dots ,m} \int \a d\mu_i  + \frac {(\nu_m(\O)- \xi (\O) )_+}{(n- \xi (\O))^{\frac 12}}  
\end{eqnarray*}
which shows that for $n \to \infty$ the distance $d_T^{\bin}$ converges to 
$$ d_T^\pi (\xi, A)= 
\sup_{\| \a\|_{2, \xi }^2  \leq 1 } \ \inf_{ \nu \in A } \int \a d(\xi \backslash \nu) . 
$$
Note that this is only a pseudo-distance since $d_T^\pi (\xi, A)=0$ does not imply that $\xi \in A$. For $d_T^\pi (\xi, A)=0$  it suffices that $A$ containes some counting measure of the form $\xi + \nu$ with $\nu \in \bar N(\O)$ because then $\xi \backslash \nu=0$.

It would be nice to have a definition of $d_T^\pi$ which is a distance for counting measures and which indicates extensions to point processes with possibly unbounded $\E \xi(E)$. 
One could also use the distance $d_T^\pi$ given in (\ref{def:dTpi}) as a definition but we could not relate it to the distance $d_T$ for binomial processes. 
In applications it would be of high importance to have such a representation and a large deviation inequality at least for Poisson point processes on $\R^d$. To the best of our knowledge even recent results like the one by Wu \cite{Wu} or Eichelsbacher, Raic and Schreiber \cite{ERS} cannot be easily extended to our setting.

\end{document}